\newtheorem{theorem}{Theorem}[section]
\newtheorem{lemma}{Lemma}[section]
\newtheorem{proposition}{Proposition}[section]
\newtheorem{definition}{Definition}[section]
\newtheorem{example}{Example}[section]
\def\Fn{\mathop{\rm Fn}\nolimits}
\def\CA{\mathop{\rm co}\nolimits}
\def\At{\mathop{\rm At}\nolimits}
\def\dom{\mathop{\rm dom}\nolimits}
\def\img{\mathop{\rm img}\nolimits}
\theoremstyle{nonumberplainnobrackets}
\title{An alternative to {\sl back-and-forth}}
\author{Tonatiuh Matos-Wiederhold}
\date{\vspace{-5ex}}
\begin{document}

\maketitle

\begin{abstract}
 We exhibit how the Rasiowa-Sikorski Lemma simplifies, in a sense, proofs of results that make use of the technique known as {\sl back-and-forth}, often resulting in not very illustrative arguments. The first two sections seek to show one simple and one complicated proofs of known results, in the hopes that the reader appreciates how the arguments end up, in our view, considerably clearer than those found in classic literature. The final section shows how the same techniques can be adapted to areas commonly considered distant to Set Theory, in this instance, Graph Theory.

 Sections one and two are based on my bachelor thesis (see \cite{tesis}), under the direction of Dr. Roberto Pichardo Mendoza, whom I deeply thank for his advice and revision of this work. All results mentioned in this paper are well known, however, as far as we know, the proofs in the last two sections are original.
\end{abstract}

\section{Introduction}
 All notation and basic results in Set Theory can be consulted in \cite{hernandez}. Among the essential notations we use, for example, for two arbitrary sets $A$ and $B$, $A\subseteq B$ means every element of $A$ is also an element of $B$. If $X$ is any set, $[X]^{<\omega}$ is the colection of all finite subsets of $X$. A set is countable if it has at most as many elements as the set of positive integers $\mathbb N$. As is common practice, $\omega$ denotes the first transfinite ordinal number, i.e., $\omega\setminus\{0\}=\mathbb N$.

 The symbols for {\sl domain} and {\sl image} of a function are also of use: if $f$ is a function, $\dom(f):=\{x\colon\exists y((x, y)\in f)\}$ and $\img(f):=\{y\colon\exists x((x, y)\in f)\}$. We also write $f[A]:=\{y\colon \exists a\in A((a,y)\in f)\}$ and $f^{-1}[B]:=\{x\colon \exists b\in B((x,b)\in f)\}$.
 
 Furthermore, $^B\!A$ is the collection of all functions from $B$ into $A$. So, for instance, ${}^{<\omega}2:=\bigcup_{n<\omega}{}^n2$ is the set of all finite binary sequences.
 
 Finally, given a function $f$ and a set $A$, we define the {\sl restriction} of $f$ to $A$ as $f\restriction A=\{(a,b)\in f\colon a\in A\}$.

\section{Preliminary}
 For the purposes of this work, a {\sl preorder} is an ordered pair $(\mathbb P,\leq)$ where $\leq$ is a reflexive and transitive relation over the set $\mathbb P$. If $\leq$ happens to also be antisymmetric, we say the ordered pair is a {\sl partial order}. We take for granted that every preorder (partial order) $\leq$ induces its corresponding strict preorder (partial order) (see \cite[Definición~4.101, p.~78]{hernandez}) and viceversa and denote it by $<$. From now on, fix $(\mathbb P,\leq)$ to be a preorder an let us define two important properties.
 
\begin{definition}\label{def:dense}
 A subset $D$ of $\mathbb P$ is called {\sl dense} if for all $p\in\mathbb P$ there exists $q\in D$ such that $q\leq p$.
\end{definition}

 We observe that in the previous definition, given the reflexivity of the order, it's equivalent substituting ``for all $p\in\mathbb{P}$'' by ``for all $p\in\mathbb{P}\setminus D$''.
 
\begin{definition}\label{def:filter}
 Let $p,q\in\mathbb{P}$ and $F\subseteq\mathbb{P}$.
\begin{list}{•}{}
 \item We say $p$ and $q$ are {\sl compatible} if there exists $r\in\mathbb{P}$ such that $r\leq p$ and $r\leq q$, and we abbreviate this fact as $p\mid q$. Otherwise, we say they are {\sl incompatible} and and use the symbol $p\perp q$.
 \item We call $F$ a {\sl filter} if:
 \begin{enumerate}
  \item for all $p,q\in F$ there exists $r\in F$ such that $r\leq p$ and $r\leq q$ and
  \item the conditions $p\in\mathbb{P}, q\in F$ and $q\leq p$ imply the set membership $p\in F$.
 \end{enumerate}
\end{list}
\end{definition}

 One simple observation is that item (2) of the definition of filter applies for any finite amount of elements in it. That is, if $F$ is a filter, $n$ a natural number and we have $\{p_i\colon i<n\}\subseteq F$, then there exists $r\in F$ such that for all $i<n$, $r\leq p_i$.

\begin{definition}\label{def:generic}
 If $F$ is a filter and $\mathcal{D}$ is a family of dense subsets of $\mathbb{P}$, we say $F$ is {\sl$\mathcal{D}$-generic} if $F$ intersects all the elements of $\mathcal{D}$, i.e. for all $D\in\mathcal{D}$, $F\cap D\neq\emptyset$.
\end{definition}

 Let's see an example that, in addition to illustrating the notions discussed so far, will prove useful in the following section.
  
\begin{example}
 Let $X$ and $Y$ be two nonempty sets. Denote by $\Fn(X,Y)$ the collection of all functions from some finite subset of $X$ into $Y$, in symbols, $$\Fn(X,Y):=\{f\in[X\times Y]^{<\omega}\colon f\text{ is a function}\}.$$ Then, directly from the definitions, the ordered pair $(\Fn(X,Y),\supseteq)$ is a partial order with maximum element $\emptyset$.
\end{example}

 Every function member of $\Fn(X,Y)$ is called a {\sl finite partial function of} $X$ into $Y$. Moreover, we prove our example has the following properties, pertinent to the aforementioned definitions .
 
\begin{theorem}\label{teo:Fn}~ 
\begin{enumerate}
 \item Given two functions in $\Fn(X,Y)$, they are compatible (according to definition~\ref{def:filter}) if and only if they are compatible as functions;
 \item if $F$ is a filter in $\Fn(X,Y)$, then $\bigcup F$ is a function;
 \item for all $x\in X$, the subset $D_x:=\{p\in\Fn(X,Y)\colon x\in\dom(f)\}$ is dense; and
 \item if $F$ is $\{D_x\colon x\in X\}$-generic, then $\dom(\bigcup F)=X$, i.e. $\bigcup F$ is a function from $X$ into $Y$.
\end{enumerate}
\end{theorem}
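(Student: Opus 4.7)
The plan is to treat the four items in order, leaning only on the definitions and the fact that the order on $\Fn(X,Y)$ is reverse inclusion, so $q\leq p$ simply means $q\supseteq p$.

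For (1), I would observe that a common lower bound of $f$ and $g$ in the order sense is exactly a function in $\Fn(X,Y)$ extending $f\cup g$. The forward direction is then immediate: any subset of a function is a function, so if some $h\in\Fn(X,Y)$ contains $f\cup g$ then $f\cup g$ is a function, i.e.\ $f$ and $g$ are compatible as functions. Conversely, if they are compatible as functions, then $f\cup g$ itself is a finite partial function and so belongs to $\Fn(X,Y)$; it serves as its own witness to order-compatibility.

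For (2), given $(x,y_1),(x,y_2)\in\bigcup F$, I would pick $p_1,p_2\in F$ containing the respective pairs, apply the filter axiom to get $r\in F$ with $r\supseteq p_1\cup p_2$, and read off $y_1=y_2$ from the fact that $r$ is itself a function. For (3), given $p\in\Fn(X,Y)$ and $x\in X$, if $x\in\dom(p)$ there is nothing to do by reflexivity; otherwise, using that $Y\neq\emptyset$, I fix any $y\in Y$ and observe that $q:=p\cup\{(x,y)\}$ is still a finite partial function, extends $p$, and lies in $D_x$. Finally, (4) follows by simply unwrapping the definition of genericity: for each $x\in X$ there is some $p\in F\cap D_x$, whence $x\in\dom(p)\subseteq\dom(\bigcup F)$, and the reverse inclusion is automatic because $\dom(p)\subseteq X$ for every $p\in F$.

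No step is really an obstacle; the items are graded warm-up exercises meant to set up the machinery. The only points to be careful about are that (3) genuinely uses the standing hypothesis $Y\neq\emptyset$, and that (2) requires the two-element form of the filter axiom (property (1) of Definition 2.2), not merely upward closure.
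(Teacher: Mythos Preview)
Your proof is correct and follows essentially the same route as the paper: for (1) both you and the paper identify $p\cup q$ as the witness to compatibility, for (3) both adjoin a single pair $(x,y_0)$ with $y_0\in Y$ arbitrary, and for (4) both unwrap genericity directly. The only cosmetic difference is in (2), where the paper appeals to (1) to say that $F$ is a compatible system of functions, while you argue directly from the filter axiom by picking a common extension $r\in F$; the underlying content is identical.
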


\begin{proof}~ 
\begin{enumerate}
 \item Let $p,q\in\Fn(X,Y)$. If $p\mid q$, then there exists $r\in\Fn(X,Y)$ with $r\supseteq p,q$, particularly, $r\supseteq p\cup q$, which implies $p\cup q\in\Fn(X,Y)$. On the other hand, if $p\cup q\in\Fn(X,Y)$, immediately we have $p\cup q\supseteq p,q$ and, as a consequence, $p\mid q$.
 \item From the previous point it follows that every filter $F$ forms a {\sl compatible system of functions}, i.e. $\bigcup F$ is a function.
 \item Fix $x\in X$, $D_x$ as in the statement and $p\in\Fn(X,Y)\setminus D_x$. Then, taking any $y_0\in Y$, $q:=p\cup\{(x,y_0)\}\in\Fn(X,Y)$ satisfies $q\supseteq p$.
 \item Under the assumption of the statement, is is clear that $\dom(\bigcup F)\subseteq X$. Given any $x\in X$, by the hypothesis of genericity, there is $p\in F\cap D_x$; i.e., $x\in\dom(p)\subseteq\dom(\bigcup F)$. This proves the other inclusion.
\end{enumerate}
\end{proof}

 One natural question is when we can guarantee, in general, the existence of a generic filter, like for instance the one described in the previous result. The following fact partially answers this question, which is sufficient for our purposes.
 
\begin{theorem}[Rasiowa-Sikorski Lemma]\label{teo:r-s}
 Given a preorder $(\mathbb P,\leq)$, $p\in\mathbb P$ and a \emph{countable} family of dense sets $\mathcal D$, there exists a $\mathcal D$-generic filter to which $p$ belongs.
\end{theorem}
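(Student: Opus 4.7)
The plan is to enumerate the countable family and build a descending sequence of conditions that meets every dense set, then take its upward closure.

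First, write $\mathcal{D}=\{D_n\colon n<\omega\}$ (if $\mathcal{D}$ is finite, repeat entries, or equivalently add the trivially dense set $\mathbb{P}$ as padding; if $\mathcal{D}=\emptyset$ any principal filter through $p$ works). I would then recursively define a sequence $\langle p_n\colon n<\omega\rangle$ in $\mathbb{P}$ by setting $p_0:=p$ and, given $p_n$, using the density of $D_n$ to pick $p_{n+1}\in D_n$ with $p_{n+1}\leq p_n$. This step is the only place where density is invoked, and it is straightforward: Definition~\ref{def:dense} applies to the arbitrary condition $p_n$ and yields the required $p_{n+1}$. Note the sequence is $\leq$-decreasing by construction, and transitivity of $\leq$ gives $p_m\leq p_n$ whenever $n\leq m$.

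Next, I would define
\[
 F:=\{q\in\mathbb{P}\colon \exists n<\omega\ (p_n\leq q)\},
\]
and check the three required properties. For $p\in F$: take $n=0$ and use reflexivity, since $p_0=p\leq p$. For upward closure (item~(2) of Definition~\ref{def:filter}): if $q\in F$ witnessed by $p_n$ and $q\leq q'$, then transitivity gives $p_n\leq q'$, so $q'\in F$. For the directedness condition (item~(1)): given $q_1,q_2\in F$ witnessed by indices $n_1,n_2$, let $m:=\max\{n_1,n_2\}$; then $p_m\leq p_{n_i}\leq q_i$ for $i=1,2$, and $p_m\in F$ (witnessed by itself via reflexivity). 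Finally, for genericity: for each $n$, the element $p_{n+1}$ belongs to $D_n$ and lies in $F$ (again via reflexivity), so $F\cap D_n\neq\emptyset$.

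There is no genuine obstacle; the only thing to watch is that the recursive construction uses countably many applications of density, which is precisely why countability of $\mathcal{D}$ is essential—one invocation of recursion on $\omega$, each step a choice of a denser condition, and the rest is bookkeeping. The mild subtlety worth flagging explicitly is the initialization $p_0=p$, which is what forces $p\in F$, and the use of reflexivity to ensure elements of the sequence themselves belong to $F$.
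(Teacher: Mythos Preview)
Your proof is correct and follows essentially the same approach as the paper: enumerate $\mathcal{D}$, build a decreasing sequence starting at $p$ that hits each $D_n$ by density, and take its upward closure as the filter. The only cosmetic differences are your explicit handling of the finite/empty $\mathcal{D}$ case and your use of $\max\{n_1,n_2\}$ where the paper uses $n_1+n_2$ for the common lower bound.
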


\begin{proof}
 By the countability, we can write $\mathcal{D}=\{D_n\colon n\in\omega\}$ (possibly with repetitions). We will construct a sequence $\{q_n\}_{n<\omega}$ in $\mathbb{P}$ such that for all $n<\omega$:
 \begin{enumerate}[label=\alph*)]
 \item $q_{n+1}\leq q_n \leq q_0=p$ (the sequence is decreasing) and
 \item $q_{n+1} \in D_n$.
 \end{enumerate}
 
 Start by taking $q_0:=p$ and, if $q_n$ is already chosen, since $D_n$ is dense, there exists $q_{n+1}\in D_n$ such that $q_{n+1}\leq q_n$. This completes the recursion.
 
 We now verify that $F:=\{t\in\mathbb{P}\colon\exists n<\omega(q_n\leq t)\}$ is the sought filter. Note that by the reflexivity of $\leq$, we have $\{q_n\}_{n<\omega}\subseteq F$ and, in particular, $p\in F$. Regarding the properties of a filter:
 \begin{enumerate}
 \item Given $t_1, t_2\in F$, there exist $m,k<\omega$ such that $q_m\leq t_1$ and $q_k\leq t_2$. By (a) and by the transitivity of $\leq$, $q_{m+k}\leq t_1,t_2$ and $q_{m+k}\in F$.
 \item Given $q\in\mathbb{P}$ and $t\in F$, if $t\leq q$ then, by the transitive property of $\leq$, $q\in F$.
 \end{enumerate}
 
 Finally, by (b), $F$ is a $\mathcal{D}$-generic filter.
\end{proof}

\section{A characterization of the rationals}
 
 A first application of the Rasiowa-Sikorski Lemma is as follows. It is known that the rationals with their usual order $(\mathbb Q,\leq)$ are a dense, countable and unbounded total order. Cantor proved that these four properties characterize the rationals as an ordered set; i.e., up to isomorphism, there is a unique order with these four properties (see \cite[Teorema~11.2, p.~297]{hernandez}). Theorem~\ref{teo:r-s} yields quite a simple proof of the mentioned fact.
 
\begin{theorem}[Cantor]\label{teo:caractQ}
 Let $(X,\preceq)$ be a dense, countable and unbounded total order. Then $(X,\preceq)$ is isomorphic to $(\mathbb Q,\leq)$.
\end{theorem}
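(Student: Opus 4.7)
The plan is to construct the isomorphism as a generic object, mirroring the strategy from Theorem~\ref{teo:Fn}. Enumerate $X=\{x_n:n<\omega\}$ and $\mathbb Q=\{q_n:n<\omega\}$, possible by countability. Let $\mathbb P$ be the sub-poset of $(\Fn(\mathbb Q,X),\supseteq)$ consisting of those finite partial functions $f$ which are also strictly order-preserving, i.e.\ for every $a,b\in\dom(f)$, $a<b$ iff $f(a)\prec f(b)$. For each $q\in\mathbb Q$ set $D_q:=\{f\in\mathbb P:q\in\dom(f)\}$ and for each $x\in X$ set $E_x:=\{f\in\mathbb P:x\in\img(f)\}$. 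Then $\mathcal D:=\{D_q:q\in\mathbb Q\}\cup\{E_x:x\in X\}$ is a countable family, so Theorem~\ref{teo:r-s} will furnish a $\mathcal D$-generic filter $F$ on $\mathbb P$ once density is established.

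Granted density, the argument concludes in close analogy with Theorem~\ref{teo:Fn}. The union $\varphi:=\bigcup F$ is a function because any two of its members lie in a common $g\in F$ (filter property), and such $g$ is itself a function. Genericity against the $D_q$ forces $\dom(\varphi)=\mathbb Q$, and genericity against the $E_x$ forces $\img(\varphi)=X$, so $\varphi$ is surjective. The order-preservation passes from elements of $F$ to $\varphi$ in the same way: given $a,b\in\mathbb Q$, pick $g\in F$ containing both pairs $(a,\varphi(a))$ and $(b,\varphi(b))$, and apply the defining condition of $\mathbb P$ to $g$. Since $\preceq$ is a total order, strict order-preservation in particular gives injectivity, so $\varphi$ is the desired order-isomorphism.

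The main obstacle, and the only place where the hypotheses on $(X,\preceq)$ actively come into play, is verifying density of the families $D_q$ and $E_x$; this is exactly what the classical back-and-forth construction does one step at a time. For $D_q$, given $f\in\mathbb P$ with $q\notin\dom(f)$, the finite set $\dom(f)$ splits into $A:=\{a\in\dom(f):a<q\}$ and $B:=\{a\in\dom(f):q<a\}$. When both are non-empty, $A$ has a maximum $a^*$ and $B$ a minimum $b^*$, and order-preservation of $f$ gives $f(a^*)\prec f(b^*)$; the density of $(X,\preceq)$ then supplies some $y\in X$ with $f(a^*)\prec y\prec f(b^*)$, and $f\cup\{(q,y)\}$ is the required extension in $D_q$. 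When $A$ or $B$ is empty the appropriate unboundedness of $(X,\preceq)$ produces $y$. The density of each $E_x$ is perfectly symmetric, invoking density and unboundedness of $(\mathbb Q,\leq)$ in place of those of $(X,\preceq)$. Once density is in hand, Rasiowa–Sikorski absorbs all the bookkeeping that a back-and-forth recursion would have to carry out explicitly.
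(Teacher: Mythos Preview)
Your proof is correct and follows essentially the same Rasiowa--Sikorski approach as the paper: a poset of finite order-preserving partial bijections, dense sets forcing every point into the domain and every point into the image, and the generic filter unioning to the isomorphism. The only cosmetic difference is direction---you build conditions in $\Fn(\mathbb Q,X)$ while the paper uses $\Fn(X,\mathbb Q)$---so the roles of the density/unboundedness hypotheses on $X$ and on $\mathbb Q$ are swapped between your $D_q$, $E_x$ and the paper's $D_x$, $E_a$, but the argument is otherwise the same.
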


\begin{proof}
 We are going to construct the isomorphism using the theorem~\ref{teo:r-s}. For this, it is necessary to give a preorder $\mathbb P$ and a suitable countable family of dense sets.

 Let $\mathbb P:=\{p\in\Fn(X,\mathbb Q)\colon\forall x,y\in X(x\prec y\rightarrow p(x)<p(y))\}$, that is $\mathbb P $ is the collection of all finite partial functions that preserve the strict order. Again $\emptyset\in\mathbb P$. We sort $\mathbb P$ with the inverse set inclusion.
 
 We will show that, for each $x\in X$, the set $D_x:=\{p\in\mathbb P\colon x\in\dom(p)\}$ is dense in $\mathbb P$. With this in mind, pick $p\in\mathbb P\setminus D_x$ and define $$x^{\leftarrow}:=\{y\in\dom(p)\colon y\prec x\}\text{ and }x^{\rightarrow}:=\{y\in\dom(p)\colon x\prec y\}.$$ Then there are exactly three cases:
 \begin{enumerate}[label=\alph*)]
 \item $x^{\leftarrow}=\emptyset$. Since $\preceq$ is a total order, $\dom(p)=x^{\rightarrow}$. Given that $\mathbb Q$ is unbounded (in particular it has no minimum) and the set $\img(p)$ is finite, there exists $a_0\in\mathbb Q$ such that for all $a\in\img(p)$ we have $a_0<a$. Thus $q:=p\cup\{(x,a_0)\}$ is an element of $D_x$ that satisfies $q\supseteq p$.
 \item $x^{\rightarrow}=\emptyset$. A minimal modification to the above argument produces $q\in D_x$ such that $q\supseteq p$.
 \item $x^{\leftarrow}\neq\emptyset\text{ and }x^{\rightarrow}\neq\emptyset$. Let us denote by $a$ and $b$, respectively, the maximum element of $x^{\leftarrow}$ and the minimum element of $x^{\rightarrow}$ in $X$. Then $p(a)<p(b)$ and by density there exists a rational $c$ such that $p(a)<c<p(b)$. Therefore $q:=p\cup\{(x,c)\}$ is an element of $D_x$ with $q\supseteq p$.
 \end{enumerate}
 
 Similarly, for each $a\in\mathbb Q$ we define $E_a:=\{p\in\mathbb P\colon a\in\img(p)\}$ and, by an argument similar to the one set out above, it follows that $E_a$ is dense.
 
 We consider $\mathcal D:=\{D_x\colon x\in X\}\cup\{E_a\colon a\in\mathbb Q\}$ and note that since $X$ and $\mathbb Q$ are countable, $\mathcal D$ is a countable family of dense subsets of $\mathbb P$. So, by Theorem~\ref{teo:r-s}, there exists $F$, a $\mathcal D$-generic $\mathbb P$-filter with $\emptyset\in F$. As mentioned in Theorem~\ref{teo:Fn}(2), $f:=\bigcup F$ is a function. We claim $f$ is the sought isomorphism.

 Given $x\in X$, since $F$ is $\mathcal D$-generic, in particular there exists $p\in F\cap D_x$, that is, $x\in\dom(p)\subseteq\bigcup_{q\in F}\dom(q)=\dom(f)$. Thus, $\dom(f)=X$, that is, $f\colon X\longrightarrow\mathbb Q$. Similarly, for every $a\in\mathbb Q$ there is $p\in F\cap E_a$ and hence $a\in\img(p)\subseteq\img(f)$. Therefore $f$ is surjective.

 Suppose for $x,y\in X$ that $x\prec y$. Then there are $p,q\in F$ such that $x\in\dom(p)$ and $y\in\dom(q)$. But $F$ is a filter, so there exists $r\in F$ such that $r\supseteq p,q$. In particular $x,y\in\dom(r)$. Thus it is clear that $f(x)=r(x)<r(y)=f(y)$. In other words, $f$ preserves the strict order and in particular is injective. Therefore, $f$ is a bijection between two linearly ordered sets that preserves the strict order, that is, by virtue of \cite[Teorema~4.124,p.~84]{hernandez}, an order isomorphism.
\end{proof}

\section{There is a unique countable atomless Boolean algebra}

 In this section we extend the previous argument to a considerably more difficult result: we will prove that, up to isomorphism, there is a single countable atomless Boolean algebra (see Definition~\ref{def:atom}, below). We use the notation, basic results, and definitions (as well as the notion of {\sl Boolean algebra}) that appear in the third chapter of \cite{bool}.
 
 A proof with {\sl back-and-forth} is found in \cite[Teorema~5.4.9,pp.~132-133]{bool}.

  Given a Boolean algebra $(A,\leq)$ (to simplify the notation, we will refer to it as just $A$) and $a,b\in A$,
\begin{enumerate}
 \item the symbols $0_A$ and $1_A$ denote, respectively, the minimum and the maximum element of $A$, and we always assume that these are different;
 \item $a\wedge b$ and $a\vee b$ represent the infimum and the supremum, correspondingly;
 \item similarly, for $B\subseteq A$, $\bigwedge B$ and $\bigvee B$ denote the infimum and supremum of the set $B$, respectively;
 \item $a'$ represents the complement of the element $a$;
 \item $a-b$ is short for $a\wedge b'$; and
 \item we denote the set of positive elements of $A$ by $A^+$, that is, $A^+:=A\setminus\{0_A\}$.
\end{enumerate}

\begin{lemma}\label{lema:supremo}
 For any $a\in A^+$ and $E\in[A]^{<\omega}$, the condition $a\leq\bigvee E$ implies that there is $x\in E$ with $a\wedge x>0_A$.
\end{lemma}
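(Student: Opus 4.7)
The plan is to argue by contrapositive: assume that $a\wedge x=0_A$ for every $x\in E$ and deduce $a=0_A$, contradicting $a\in A^+$. Two routes present themselves, and I would pick whichever has been established earlier in the cited reference \cite{bool}.

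The first route uses complements. In any Boolean algebra the identity $a\wedge x=0_A$ is equivalent to $a\leq x'$. Applying this to each $x\in E$ and then taking the infimum gives $a\leq\bigwedge_{x\in E}x'$, which by the finite De~Morgan law equals $\bigl(\bigvee E\bigr)'$. Combining this with the hypothesis $a\leq\bigvee E$ forces $a\leq\bigvee E\wedge(\bigvee E)'=0_A$, the desired contradiction.

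The second route uses finite distributivity directly: under the hypothesis $a\leq\bigvee E$ we have $a=a\wedge\bigvee E=\bigvee_{x\in E}(a\wedge x)$, and the contrapositive assumption then collapses the right-hand side to a finite join of zeros, yielding $a=0_A$.

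I do not foresee any real obstacle. The only step requiring a comment is the use of the finite De~Morgan law (respectively, finite distributivity) over an arbitrary $E\in[A]^{<\omega}$; these follow by a one-line induction on $|E|$ from the two-element versions, both of which are standard axioms/lemmas in the Boolean-algebra chapter of \cite{bool} and can be invoked without reproof.
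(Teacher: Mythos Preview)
Your proposal is correct. The paper's proof is exactly your second route: by contrapositive, assuming $a\wedge x=0_A$ for all $x\in E$, it computes $a=a\wedge\bigvee E=\bigvee\{a\wedge x\colon x\in E\}=0_A$ via distributivity.
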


\begin{proof}
 By counterpositive, suppose that $a\wedge x=0_A$ for each $x\in E$. Then, by distributivity, $$a=a\wedge\bigvee E=\bigvee\{a\wedge x\colon x\in E\}=0_A,$$ that is, $a\notin A^+$.
\end{proof}

\begin{definition}\label{def:atom}
 Let $A$ be a Boolean algebra. An element $a\in A^+$ is called an {\sl atom} if it is a $\leq$-minimal element in $A^+$. The set of atoms of $A$ is denoted by $\At(A)$. To simplify the notation, let us also define, for each $a\in A$, the set $$\At_a(A):=\{x\in\At(A)\colon x\leq a\}.$$ In this way, we say that $A$ is {\sl atomless} whenever $\At(A)=\emptyset$.
\end{definition}

 For a brief illustrative example, consider, for a nonempty set $X$, the Boolean algebra $(\mathcal P(X),\subseteq)$. In this case, given $Y\subseteq X$, $\At_Y(\mathcal P(X))=\{\{y\}\colon y\in Y\}$. Note that if $x,y\in X$ satisfy $\emptyset\neq\{x\}\cap\{y\}$, then $x=y$. We also have that $\bigcup\At_Y(\mathcal P(X))=Y$. This serves as motivation for the next two propositions.
 
 We have the following properties with respect to atoms in $A$. First, if two atoms of $A$, say $a$ and $b$, are such that $0_A<a\wedge b$, then, as $a\wedge b\leq a$ and because $a$ is a atom, $a\wedge b=a$, that is, $a\leq b$ and, since $b$ is also an atom, $a=b$. In short, we have proved the following.
 
\begin{proposition}\label{obs:atom}
 For any $a,b\in\At(A)$, if $0_A<a\wedge b$, then $a=b$.
\end{proposition}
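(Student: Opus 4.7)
The plan is to unpack the definition of an atom (a $\leq$-minimal element of $A^+$) and exploit the fact that $a \wedge b$, under the hypothesis, is a positive element sitting below both atoms. Concretely, I fix $a, b \in \At(A)$ and assume $0_A < a \wedge b$, so that $a \wedge b \in A^+$.

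The first step is to observe, purely from the definition of infimum, that $a \wedge b \leq a$. Since $a$ is an atom, hence a $\leq$-minimal element of $A^+$, and $a \wedge b$ is an element of $A^+$ below $a$, minimality forces $a \wedge b = a$. In particular, this yields $a \leq b$.

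For the second step, I apply the same reasoning with the roles swapped: $a \in A^+$ sits below the atom $b$, so $\leq$-minimality of $b$ in $A^+$ forces $a = b$. (Alternatively, from $a \wedge b = a$ one also has $a \wedge b \leq b$, and repeating the first argument with $b$ in place of $a$ gives $a \wedge b = b$, whence $a = b$.)

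There is really no obstacle here; the proposition is essentially an immediate unfolding of the definition of atom, and the entire content is that a positive element below an atom must coincide with that atom. The only small subtlety to flag is that one must use \emph{both} atoms' minimality (once for each inequality), or equivalently combine minimality of one with the antisymmetry of $\leq$ after deducing $a \leq b$ and $b \leq a$.
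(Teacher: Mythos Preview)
Your argument is correct and mirrors the paper's own proof essentially verbatim: from $0_A < a\wedge b \leq a$ and the atomicity of $a$ you get $a\wedge b = a$, hence $a\leq b$, and then the atomicity of $b$ forces $a=b$.
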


\begin{proposition}\label{prop:atom}
 Suppose $ A $ is finite. Then,
\begin{enumerate}
 \item $\At(A)$ is dense in $A^+$ and
 \item for all $a\in A$, $a=\bigvee\At_a(A)$ (see Definition~\ref{def:atom}).
\end{enumerate}
\end{proposition}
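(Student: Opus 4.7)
The plan is to prove (1) first and then deploy it to prove (2).

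For (1), I would fix $a \in A^+$ and look at the set $S_a := \{x \in A^+ : x \leq a\}$, which is nonempty because $a \in S_a$ and finite because $A$ itself is. Any nonempty finite subset of a poset has a $\leq$-minimal element, so $S_a$ admits some minimal $b$. I would then check that minimality of $b$ inside $S_a$ in fact upgrades to minimality inside $A^+$: if some $c \in A^+$ satisfied $c < b$, then $c \leq a$ by transitivity would place $c$ in $S_a$, contradicting the choice of $b$. Thus $b \in \At(A)$ and $b \leq a$, which is exactly the density of $\At(A)$ in $A^+$.

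For (2), the trivial case $a = 0_A$ gives $\At_a(A) = \emptyset$, whose supremum is $0_A$ by convention, matching $a$. For $a \in A^+$, the set $\At_a(A)$ is finite, so $s := \bigvee \At_a(A)$ exists, and $s \leq a$ is immediate since $a$ is an upper bound for $\At_a(A)$. The crux is the reverse inequality $a \leq s$. I would argue by contradiction: assuming $s < a$, the Boolean identity $s \leq a \iff a \wedge s' = 0_A$ (easily checked via $a = (a \wedge s) \vee (a \wedge s')$) gives $a - s > 0_A$, so part (1) supplies an atom $c \leq a - s$. Then $c \leq a$ places $c \in \At_a(A)$, forcing $c \leq s$; at the same time $c \leq a - s \leq s'$, so $c \leq s \wedge s' = 0_A$, contradicting $c \in A^+$.

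The only place I anticipate any friction is the transition in (2) from $s < a$ to $a - s > 0_A$, which rests on the Boolean identity recalled above; everything else flows directly from finiteness, the definition of atom, and Lemma~\ref{lema:supremo} is not even needed here. The overall strategy mirrors the familiar pattern in lattice theory: extract a minimal element by finiteness, then use the atom obtained to witness a ``missing'' part of the supremum.
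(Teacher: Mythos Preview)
Your proposal is correct and follows essentially the same approach as the paper: for (1) you extract a minimal element of $\{x\in A^+ : x\leq a\}$ by finiteness where the paper phrases the same idea contrapositively via an infinite strictly decreasing chain, and for (2) your argument is virtually identical to the paper's (set $s=\bigvee\At_a(A)\leq a$, apply (1) to $a-s$ if positive, and derive $c\leq s\wedge s'=0_A$).
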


\begin{proof}
 Let's show the first point by counterpositive. If it were not the case that $\At(A)$ is dense in $A^+$, then there would be an element $a\in A^+$ such that for all $b\in A$, $b\leq a$ implies $b\notin\At(A)$. We will argue that this last condition results in the existence of a strictly decreasing sequence $\{a_k\colon k<\omega\}\subseteq A$, especially with an infinity of elements, which guarantees that $A$ is infinite.
 
 Since in particular $a_0:=a$ is not an atom, there exists $a_1\in A^+$ such that $a_1<a$. If, for some natural number $n$, we have constructed $\{a_k\colon k\leq n\}$, a strictly decreasing sequence, then, by hypothesis, $a_n$ is not an atom (since $a_n\leq a$), and thus we obtain $a_{n+1}$ with the desired properties. This completes our recursion.
 
 Now thinking about the second item, it is clear that $a$ is the upper bound of $\At_a(A)$ and thus $b:=\bigvee\At_a(A)\leq a$. On the other hand, if it happened that $a-b\in A^+$, then the first item of the lemma would throw us an atom $c\in A^+$ such that $c\leq a-b$; then $c\leq a$ and $c\leq b'$. The first of these inequalities, together with $c\in\At(A)$, tells us that $c\leq b$, which clearly, being $c$ positive, contradicts the second inequality. Therefore, $a-b=0_A$, that is, $a=b$.
\end{proof}

 From the first item of the previous proposition it follows that the equality $\At(A)=\emptyset$ implies that $A$ is infinite (note that $\emptyset$ cannot be dense in $A$ since $1_A\in A^+$). Moreover if $A$ is countable, then $|A|=\omega$. This is important for the central result of the section, since all countable and atomless algebra have cardinality $\omega$.

  Suppose that $A_0$ is a subalgebra of $A$ and let $u\in A$. Since the intersection of subalgebras is once again a subalgebra, for any arbitrary subset of $A$ it makes sense to define the smallest subalgebra that contains it. In particular, we have the following.

\begin{definition}\label{def:simple}
 The {\sl simple extension of $A_0$ given by $u$} is the smallest subalgebra of $A$ that contains $A_0\cup\{u\}$ and is denoted by $A_0(u)$.
\end{definition}

 For the following lemmas, we will assume that $C_0$ and $D_0$ are Boolean algebras and that $C$ and $D$ are subalgebras of $C_0$ and $D_0$ respectively. We will stop specifying which algebra the maximum and minimum and their respective orders belong to as it is clear from the context and simplifies the reading of the proofs.

\begin{lemma}\label{lema:simple}
 Let $u$ be an arbitrary element of $C_0$. We have that $$C(u)=\{(a\wedge u)\vee(b-u)\colon a,b\in C\}.$$ If, in addition, $H\colon C(u)\longrightarrow D_0$ is a Boolean homomorphism, then $H[C(u)]$ is the simple extension of the subalgebra $H[C]$ given by $H(u)$, that is, $H[C(u)]=H[C](H(u))$.
\end{lemma}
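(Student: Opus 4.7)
The plan is to show the two claimed equalities in turn, using only the definition of $C(u)$ as the smallest subalgebra containing $C \cup \{u\}$, the standard Boolean-algebra identities, and, for the second half, the fact that a Boolean homomorphism commutes with $\wedge$, $\vee$, and $'$ (hence also with $-$).

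Write $S := \{(a \wedge u) \vee (b - u) : a, b \in C\}$. The inclusion $S \subseteq C(u)$ is immediate: each element $a, b$ of $C$ lies in $C(u)$, and $u \in C(u)$, so $(a \wedge u) \vee (b \wedge u') \in C(u)$ because $C(u)$ is a subalgebra. The reverse inclusion follows by verifying that $S$ is itself a subalgebra of $C_0$ containing $C \cup \{u\}$. For the containments, $0 = (0\wedge u)\vee(0-u)$ and $1 = (1\wedge u)\vee(1-u)$ lie in $S$; for $a\in C$ we have $a = (a\wedge u)\vee(a-u)$, so $C\subseteq S$; and $u = (1\wedge u)\vee(0-u)$.

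The core computation is checking closure of $S$ under $\wedge$, $\vee$, and $'$. For $\vee$, distributing gives
\[
[(a_1\wedge u)\vee(b_1-u)] \vee [(a_2\wedge u)\vee(b_2-u)] = ((a_1\vee a_2)\wedge u) \vee ((b_1\vee b_2)-u).
\]
For $\wedge$, the cross terms $(a_1\wedge u)\wedge(b_2-u)$ and $(b_1-u)\wedge(a_2\wedge u)$ vanish because $u\wedge u' = 0$, leaving $((a_1\wedge a_2)\wedge u) \vee ((b_1\wedge b_2)-u)$. The obstacle, if there is one, is the complement: expanding
\[
[(a\wedge u)\vee(b-u)]' = (a'\vee u')\wedge(b'\vee u)
\]
and distributing yields $(a'\wedge b')\vee(a'\wedge u)\vee(u'\wedge b')$; here the extra summand $a'\wedge b'$ is absorbed by the other two because $a'\vee(a'\wedge b') = a'$ and $b'\vee(a'\wedge b') = b'$, producing $(a'\wedge u)\vee(b'-u) \in S$. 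Hence $S$ is a subalgebra, so $C(u)\subseteq S$, and equality holds.

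For the second assertion, let $H\colon C(u)\to D_0$ be a Boolean homomorphism. Applying $H$ to the description just proved and using that $H$ preserves $\wedge$, $\vee$, and $'$,
\[
H[C(u)] = \{(H(a)\wedge H(u)) \vee (H(b) - H(u)) : a,b\in C\} = \{(a^*\wedge H(u)) \vee (b^* - H(u)) : a^*,b^*\in H[C]\},
\]
which by the first part (applied to the subalgebra $H[C]$ of $D_0$ and the element $H(u)\in D_0$) is precisely $H[C](H(u))$.
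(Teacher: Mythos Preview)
Your argument follows essentially the same route as the paper's: show the set $S$ of elements $(a\wedge u)\vee(b-u)$ is a subalgebra containing $C\cup\{u\}$, hence equals $C(u)$ by minimality, then push the description through $H$. One step, however, is not correctly justified. In the complement computation you arrive at
\[
(a'\wedge b')\vee(a'\wedge u)\vee(u'\wedge b')
\]
and claim the term $a'\wedge b'$ is absorbed ``because $a'\vee(a'\wedge b')=a'$ and $b'\vee(a'\wedge b')=b'$.'' Those absorption identities are true but irrelevant here: the summands present are $a'\wedge u$ and $b'\wedge u'$, not $a'$ and $b'$, so neither identity applies. What you actually need is $a'\wedge b'\leq (a'\wedge u)\vee(b'\wedge u')$, which follows by writing
\[
a'\wedge b'=(a'\wedge b')\wedge(u\vee u')=(a'\wedge b'\wedge u)\vee(a'\wedge b'\wedge u')\leq(a'\wedge u)\vee(b'\wedge u').
\]
This is exactly the inequality the paper isolates as (\ref{lema:simple_obs}). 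With that fix, your proof is correct and matches the paper's approach; your treatment of the second assertion is just a more compressed version of the paper's two-inclusion check.
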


\begin{proof}
 In the proof that follows we freely use the basic properties of Boolean operators, such as associativity, commutativity, distributivity, De Morgan's Laws, and equivalences with the order.

 First, let's see that $C':=\{(a\wedge u)\vee(bu)\colon a,b\in C\}$ is a subalgebra of $C_0$ containing $C\cup\{u\}$, and so then by minimality, $C(u)\subseteq C'$.
 
 First of all, since $C$ is a subalgebra of $C_0$, $\{0,1\}\subseteq C$, and so $$u=(1\wedge u)\vee(0-u)\in C'.$$
 
 Also, given $\{a,a_1,b,b_1\}\subseteq C$, we have $$a=a\wedge(u\vee u')=(a\wedge u)\vee(a-u)\in C',$$ that is, $C\cup\{u\}\subseteq C'$. Then we can use that (again for being a subalgebra) both $a\vee a_1$ and $b\vee b_1$ are elements of $C$ to get
\begin{align*}
 [(a\wedge u)\vee(b-u)]\vee[(a_1\wedge u)\vee(b_1-u)]&=\\
 [(a\wedge u)\vee(a_1\wedge u)]\vee[(b-u)\vee(b_1-u)]&=\\
 [(a\vee a_1)\wedge u]\vee[(b\vee b_1)-u]&\in C'.
\end{align*}

 To see that $C'$ is closed under complements, first notice that
\begin{align*}
 (a'\wedge b')\wedge[(b'\wedge u')\vee(a'\wedge u)]'&=\\
 (a'\wedge b')\wedge[(b'\wedge u')'\wedge(a'\wedge u)']&=\\
 b'\wedge(b'\wedge u')'\wedge a'\wedge(a'\wedge u)'&=\\
 b'\wedge(b\vee u)\wedge a'\wedge(a\vee u')&=\\
 b'\wedge u\wedge a'\wedge u'&=0.
\end{align*}
 And as a consequence,
\begin{align}\label{lema:simple_obs}
 a'\wedge b'\leq(b'\wedge u')\vee(a'\wedge u).
\end{align}

 Finally, since $\{a',b'\}\subseteq C$,
\begin{align}\label{lema:simple_2}
\begin{split}
 [(a\wedge u)\vee(b-u)]'&=\\
 (a\wedge u)'\wedge(b-u)'&=\\
 (a'\vee u')\wedge(b'\vee u)&=\\
 [(a'\wedge b')\vee(a'\wedge u)]\vee[(u'\wedge b')\vee(u\wedge u')]&=\\
 (a'\wedge b')\vee(a'\wedge u)\vee(u'\wedge b')&=\quad\text{(by (\ref{lema:simple_obs}))}\\
 (a'\wedge u)\vee(b'-u)&\in C'.
\end{split}
\end{align}

 In this way, $C'$ is a subalgebra of $C_0$ containing $C\cup\{u\}$ and with that we have the first inclusion: $C(u)\subseteq C'$. The second inclusion is simpler, since given $a,b\in C$, that $C(u)$ is a subalgebra that contains $C\cup\{u\}$ implies that $(a\wedge u)\vee(b-u)\in C(u)$. We conclude that $C(u)=C'$ as desired.
 
 Let us assume now that $H\colon C(u)\longrightarrow D_0$ is a Boolean homomorphism. From what we have just shown in the previous paragraphs, just see that $$H[C(u)]=\{(a\wedge H(u))\vee(b-H(u))\colon a,b\in H[C]\}.$$
 
 On the one hand, given $a,b\in H[C]$, there are $x,y\in C$ with $H(x)=a$ and $H(y)=b$. Since $H$ is a homomorphism,
\begin{align*}
 (a\wedge H(u))\vee(b-H(u))=(H(x)\wedge H(u))\vee(H(y)-H(u))\\
 =H((x\wedge u)\vee(y-u))\in H[C(u)]
\end{align*}
 On the other hand, if $a\in H[C(u)]$, then there exist (using the first item of the lemma) $x,y\in C$ such that $a=H((x\wedge u)\vee(y-u))$. Since $H$ is a homomorphism, a similar calculation as above reveals that $a=(H(x)\wedge H(u))\vee(H(y)-H(u))$. Therefore, $H[C(u)]$ is the simple extension of $H[C]$ given by $H(u)$.
\end{proof}

 The following extension lemma will be very useful to us.

\begin{lemma}\label{lema:ext}
 Suppose that $u\in C_0$ and $w\in D_0$ are arbitrary. If a Boolean isomorphism $h\colon C\longrightarrow D$ satisfies that for all $x\in C$
\begin{enumerate}
 \item $x\leq u'$ is equivalent to $h(x)\leq w'$ and
 \item $x\leq u$ if and only if $h(x)\leq w$,
\end{enumerate}
 then there exists a Boolean isomorphism $H\colon C(u)\longrightarrow D(w)$ such that $h\subseteq H$ and $H(u)=w$.
\end{lemma}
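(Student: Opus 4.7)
The plan is to build $H$ explicitly from the canonical form for simple extensions provided by Lemma~\ref{lema:simple}. That lemma writes every element of $C(u)$ as $(a\wedge u)\vee(b-u)$ with $a,b\in C$ and, analogously, every element of $D(w)$ as $(c\wedge w)\vee(d-w)$ with $c,d\in D$. The only reasonable candidate is therefore
\[ H\bigl((a\wedge u)\vee(b-u)\bigr):=(h(a)\wedge w)\vee(h(b)-w), \]
and I would organize the proof around verifying that this assignment is well defined, extends $h$, sends $u$ to $w$, is a homomorphism, and is bijective.

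The heart of the argument is the well-definedness, which is where hypotheses~(1) and~(2) enter essentially. Suppose $(a\wedge u)\vee(b-u)=(a_1\wedge u)\vee(b_1-u)$ with $a,a_1,b,b_1\in C$. Meeting both sides with $u$ and with $u'$ gives $a\wedge u=a_1\wedge u$ and $b-u=b_1-u$. A short computation shows that these are equivalent, respectively, to $(a\wedge a_1')\vee(a_1\wedge a')\leq u'$ and $(b\wedge b_1')\vee(b_1\wedge b')\leq u$. I would then apply hypothesis~(1) to the element $x:=(a\wedge a_1')\vee(a_1\wedge a')\in C$ and hypothesis~(2) to $y:=(b\wedge b_1')\vee(b_1\wedge b')\in C$, and use that $h$ is a Boolean homomorphism, to obtain $(h(a)\wedge h(a_1)')\vee(h(a_1)\wedge h(a)')\leq w'$ and the analogous inequality with $w$. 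Running the equivalence backwards, these become $h(a)\wedge w=h(a_1)\wedge w$ and $h(b)-w=h(b_1)-w$, which is exactly the equality required in $D(w)$.

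After this step, the rest is routine. Preservation of $\vee$ by $H$ follows from the identity $[(a\wedge u)\vee(b-u)]\vee[(a_1\wedge u)\vee(b_1-u)]=[(a\vee a_1)\wedge u]\vee[(b\vee b_1)-u]$ established in Lemma~\ref{lema:simple}, combined with the fact that $h$ preserves $\vee$; preservation of complements follows analogously from equation~(\ref{lema:simple_2}). That $H$ extends $h$ is obtained by writing $x\in C$ in canonical form $x=(x\wedge u)\vee(x-u)$, giving $H(x)=(h(x)\wedge w)\vee(h(x)-w)=h(x)$; and $H(u)=w$ follows from $u=(1\wedge u)\vee(0-u)$ and $h(0)=0,\,h(1)=1$.

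Finally, surjectivity is immediate: given $(c\wedge w)\vee(d-w)\in D(w)$, pick preimages $a,b\in C$ of $c,d$ under $h$ and note the element equals $H\bigl((a\wedge u)\vee(b-u)\bigr)$. For injectivity, since $H$ is a homomorphism it suffices to check its kernel is trivial: if $(h(a)\wedge w)\vee(h(b)-w)=0$, then $h(a)\leq w'$ and $h(b)\leq w$, and by hypotheses~(1) and~(2) applied in the reverse direction we get $a\leq u'$ and $b\leq u$, whence $(a\wedge u)\vee(b-u)=0$. The main obstacle is clearly the well-definedness: packaging the hypotheses so that they feed into the equality in exactly the right form. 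The homomorphism and bijectivity checks are essentially bookkeeping inherited from Lemma~\ref{lema:simple}.
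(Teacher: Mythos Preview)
Your proposal is correct and follows essentially the same approach as the paper's proof: define $H$ via the canonical form from Lemma~\ref{lema:simple}, establish well-definedness by meeting with $u$ and $u'$ and invoking hypotheses (1) and (2), then verify the homomorphism properties, extension of $h$, $H(u)=w$, and bijectivity (injectivity via trivial kernel using the reverse implications of (1) and (2)). The only cosmetic differences are that you package the well-definedness step with symmetric differences rather than two separate inequalities, and you argue surjectivity directly from preimages under $h$ whereas the paper invokes the second part of Lemma~\ref{lema:simple} to get $H[C(u)]=h[C](w)=D(w)$.
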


\begin{proof}
 Let us assume the hypotheses of the lemma and let $x\in C(u)$. We want to define $H(x)$. By the previous lemma, $x$ has a representation in terms of two elements of $C$ and $u$. Although this representation for $x$ is not unique, we will see that the hypotheses that $h$ satisfies have the consequence that, for any $\{a_0,a_1,b_0,b_1\}\subseteq C$, the equalities
\begin{align}\label{lema:ext_1}
 x=(a_0\wedge u)\vee(b_0-u)&=(a_1\wedge u)\vee(b_1-u)
\end{align}
 imply that
\begin{align}\label{lema:ext_2}
 (h(a_0)\wedge w)\vee(h(b_0)-w)&=(h(a_1)\wedge w)\vee(h(b_1)-w).
\end{align}
  
 With that in mind, let us do the following. First, the condition (\ref{lema:ext_1}) implies that\vspace*{-\baselineskip}
\begin{align}\label{obs:function}
\begin{split}
 \\&a_0\wedge u=x\wedge u=a_1\wedge u\\
 \text{and}\quad&b_0-u=x-u=b_1-u;\\
 \text{then, for all}\ i<2,\qquad&(a_i-a_{1-i})\wedge u=0_A\\
 \text{and}\quad&(b_i-b_{1-i})-u=0_A;\\
 \text{from where}\qquad&(a_i-a_{1-i})\leq u'\\
 \text{and}\quad&(b_i-b_{1-i})\leq u.
\end{split}
\end{align}

 Now, using the two hypotheses of the lemma and that $h$ is a Boolean homomorphism, we obtain

\begin{align}\label{obs:function2}
\begin{split}
 \text{for}\ i<2,\quad&h(a_i)-h(a_{1-i})\leq w'\\
 \text{and}\quad&h(b_i)-h(b_{1-i})\leq w
\end{split}
\end{align}

 The latter inequalities imply that $$h(a_0)\wedge w=h(a_1)\wedge w\quad\text{and}\quad h(b_0)-w=h(b_1) -w$$ and, with that, condition (\ref{lema:ext_2}).
 
 Let's define then, for $x=(a\wedge u)\vee(b-u)$, $H(x):=(h(a)\wedge w)\vee(h(b)-w)$. $H$ is well defined by everything done previously. Especially if $a\in C$, we can write $a=(a\wedge u)\vee(a-u)$ to calculate $H(a)=(h(a)\wedge w)\vee(h(a)-w)=h(a)$, that is, $H\restriction C=h$. We can also write $u=(1\wedge u)\vee(0-u)$ to see that $H(u)=(h(1)\wedge w)\vee(h(0)-w)=(1\wedge w)\vee(0-w)=w$.

 It only remains to verify that, indeed, $H$ is an isomorphism. Let's arbitrarily take two elements $x,y$ in $C(u)$ and, by the previous lemma, suppose they look like $x=(a\wedge u)\vee(b-u)$ and $y=(a_1\wedge u)\vee(b_1-u)$, for some $\{a,a_1,b,b_1\}\subseteq C$. As we did in the proof of Lemma~\ref{lema:simple}, $x\vee y=[(a\vee a_1)\wedge u]\vee[(b\vee b_1)-u]$ and , as a consecuense,
\begin{align*}
 H(x\vee y)&=\\
 H([(a\vee a_1)\wedge u]\vee[(b\vee b_1)-u])&=\\
 (h(a\vee a_1)\wedge w)\vee(h(b\vee b_1)-w)&=\\
 (h(a)\vee h(a_1))\wedge w)\vee(h(b)\vee h(b_1))-w)&=\\
 ((h(a)\wedge w)\vee(h(a_1)\wedge w))\vee((h(b)-w)\vee(h(b_1))-w))&=\\
 ((h(a)\wedge w)\vee(h(b)-w))\vee((h(a_1)\wedge w)\vee(h(b_1)-w))&=\\
 H(((a\wedge u)\vee(b-u)))\vee H(((a_1\wedge u)\vee(b_1-u)))&=H(x)\vee H(y).
\end{align*}
 
 By an argument similar to the one we use to deduce (\ref{lema:simple_obs}) (writing $h(a)$, $h(b)$ and $w$ instead of $a$, $b$ and $u$, respectively), we get that
\begin{align}\label{obs:quick}
 h(a)'\wedge h(b)'\leq(h(b)'-w)\vee(h(a)'\wedge w).
\end{align} 
 Using this and the calculation that was done in (\ref{lema:simple_2}), we have that
\begin{align*}
 H(x)'=
 H([(a\wedge u)\vee(b-u)])'&=\\
 [(h(a)\wedge w)\vee(h(b)-w)]'&=\\
 (h(a)'\vee w')\wedge(h(b)'\vee w)&=\\
 (h(a)'\wedge h(b)')\vee(h(a)'\wedge w)\vee(w'\wedge h(b)')\vee(w'\wedge w)&=\\
 (h(a)'\wedge h(b)')\vee(h(a)'\wedge w)\vee(w'\wedge h(b)')&=\quad\text{(by (\ref{obs:quick}))}\\
 (h(a)'\wedge w)\vee(h(b)'-w)&=\\
 (h(a')\wedge w)\vee(h(b')-w)&=\\
 H((a'\wedge u)\vee(b'-u))&=\\
 H([(a\wedge u)\vee(b-u)]')&=H(x').
\end{align*}

 We thus conclude that $H$ is a Boolean homomorphism. Furthermore, Lemma~\ref{lema:simple} implies that $H[C(u)]=H[C](H(u))$ and since $H(u)=w$ and $h$ is an isomorphism extended by $H$, it follows that $H[C](H(u))=h[C](w)=D(w)$. We conclude that $ H\colon C(u)\longrightarrow D(w)$ is a Boolean epimorphism. It is worth noting that, from our lemma hypotheses, so far we haven't used the return implications of items (1) and (2).
 
 Let us see that $H$ is injective. Given $x\in C(u)$ such that $H(x)=0$, set $a,b\in C$ with $x=(a\wedge u)\vee(b-u)$. So we have the equality $(h(a)\wedge w)\vee(h(b)-w)=0$, which, in turn, has as a consequence that $(h(a)\wedge w)=(h(b)-w)=0$ or, seen in another way, $h(a)\leq w'$ and $h(b)\leq w$. Applying our two hypotheses of the lemma, we obtain that $a\leq u'$ and $b\leq u$, then $x=(a\wedge u)\vee(b-u)=0\vee0=0$. We checked that $H$ has a trivial kernel, and this implies that $H$ is injective. With this we have that $H$ is an isomorphism.
\end{proof}

 Intuitively, the lemma we just proved tells us that if $w$ is compared to $D$ in the same way that $u$ is compared to $C$, then we can extend any isomorphism between $C$ and $D$ to include $u$ and $w$. However, for the test of the central result of this section, it is very useful to know how to find an element $w\in D_0$ that does what is required by the lemma. The following result has exactly this objective.
 
\begin{lemma}\label{lema:hayw}
 Now suppose that $C$ and $D$ are finite, $u\in C_0$, $D_0$ is atomless, and that $h\colon C\longrightarrow D$ is a Boolean isomorphism. Then there exists $w\in D_0$ that satisfies all the hypotheses of Lemma~\ref{lema:ext}.
\end{lemma}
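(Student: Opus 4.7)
My plan is to exploit the finiteness of $C$ and $D$ (so that by Proposition \ref{prop:atom} every element of either is the join of the atoms below it) together with the atomlessness of $D_0$ (which lets us split any atom of $D$ into two nonzero disjoint pieces, since atoms of $D$ are nonzero elements of $D_0$ but are not atoms there). Because $h$ is a Boolean isomorphism it preserves the order, hence restricts to a bijection between $\At(C)$ and $\At(D)$; this bijection is the combinatorial backbone of the construction.

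First I would classify each atom $a$ of $C$ by its position relative to $u$ in $C_0$, setting
\[
A_1 := \{a \in \At(C) : a \le u\}, \quad A_2 := \{a \in \At(C) : a \le u'\}, \quad A_3 := \At(C) \setminus (A_1 \cup A_2),
\]
and $B_i := h[A_i]$, so that $\{B_1,B_2,B_3\}$ partitions $\At(D)$. For each $b \in B_3$, invoke atomlessness of $D_0$ to pick $b^{\ast} \in D_0$ with $0 < b^{\ast} < b$, and then define
\[
w := \Bigl(\bigvee_{b \in B_1} b\Bigr) \vee \Bigl(\bigvee_{b \in B_3} b^{\ast}\Bigr)
\]
(taking empty joins as $0$). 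Using Proposition \ref{obs:atom} (distinct atoms of $D$ have meet $0$) and distributivity, a direct computation shows that for every atom $b$ of $D$ the meet $b \wedge w$ equals $b$, $0$, or $b^{\ast}$ according as $b$ lies in $B_1$, $B_2$, or $B_3$. In particular $b \le w$ iff $b \in B_1$, and $b \le w'$ iff $b \in B_2$.

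To check the two biconditionals demanded by Lemma \ref{lema:ext}, pick $x \in C$ and write $x = \bigvee \At_x(C)$ via Proposition \ref{prop:atom}; since $h$ preserves finite joins and bijects atoms to atoms, $h(x) = \bigvee h[\At_x(C)]$ and $h[\At_x(C)] = \At_{h(x)}(D)$. Therefore $x \le u$ is equivalent to $\At_x(C) \subseteq A_1$, which translates through the bijection to $\At_{h(x)}(D) \subseteq B_1$, which by the previous paragraph says exactly that $h(x) \le w$. The companion equivalence $x \le u' \Leftrightarrow h(x) \le w'$ follows identically, with $B_2$ replacing $B_1$.

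The main obstacle is recognising that the atoms of $C$ must be classified into three groups rather than two: an atom $a \in \At(C)$ may straddle $u$ in $C_0$, i.e., satisfy $0 < a \wedge u < a$, so it is neither $\le u$ nor $\le u'$. Accommodating this third class is precisely what forces the hypothesis that $D_0$ be atomless, because the matching atom $h(a)$ of $D$ must be cut into two nonzero pieces by an element of $D_0$, which is impossible without atomlessness.
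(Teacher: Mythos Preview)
Your proposal is correct and follows essentially the same approach as the paper: the tripartition of $\At(C)$ according to the relation of each atom to $u$, the use of atomlessness of $D_0$ to split each image atom in the ``straddling'' class, and the same definition of $w$ as the join of $h[A_1]$ together with the chosen proper pieces. Your verification via the computation of $b\wedge w$ for each atom $b\in\At(D)$ and the lift to arbitrary $x$ through $h[\At_x(C)]=\At_{h(x)}(D)$ is, if anything, a slightly more streamlined packaging of the same case analysis the paper carries out.
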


\begin{proof}
 Start by defining the following sets of atoms.
\begin{align*}
 A^0&:=\{a\in\At(C)\colon a\leq u\},\\
 A^1&:=\{a\in\At(C)\colon a\leq u'\}\text{ and}\\
 A^{1/\pi}&:=\At(C)\setminus(A^0\cup A^1).
\end{align*}
 It is clear that these three are disjoint and their union is $\At(C)$.
 
 Since $\At(D_0)=\emptyset$, for every $y\in A^{1/\pi}$, there exists $\hat y\in D_0^+$ such that $\hat y<h(y)$. With all this, we propose $$w:=\bigvee\{\hat y\colon y\in A^{1/\pi}\}\vee\bigvee h[A^0].$$ Let's see that $w$ fulfills the biconditionals that Lemma~\ref{lema:ext} has as hypotheses.
 
 We first prove that items (1) and (2) of Lemma~\ref{lema:ext} are true when $x$ is an atom, and then we see that the result extends to an arbitrary $x$. With that in mind, let's take an atom $x\in C$.
 
 Suppose $x\leq u'$; in particular, $x\in A^1$. Thus, by distributivity, $$h(x)\wedge w=\bigvee\{h(x)\wedge\hat y\colon y\in A^{1/\pi}\}\vee\bigvee\{h(x)\wedge h(y)\colon y\in A^0\}.$$ For each $y\in A^{1/\pi}$, $h(x)\wedge\hat y\leq h(x)\wedge h(y)$. Since $h$ is an isomorphism, it must send different atoms into different atoms, thus, by Proposition~\ref{obs:atom}, $h(x)\wedge h(y)=0$. Similarly, given any $y\in A^0$, $h(x)\wedge h(y)=0$. This proves that $h(x)\wedge w=0$, and then $h(x)\leq w'$.
 
 Now let us assume that $x\leq u$. By our definition, this means that $x\in A^0$, which implies that $h(x)\leq\bigvee h[A^0]\leq w$.
 
 Now consider the case where $h(x)\leq w'$. Then $h(x)\wedge w=0$, from where, by the choice of $w$ and by distributivity, we obtain that for any $y\in A^0$, $h(x)\wedge h(y)=0$ and, for any $y\in A^{1/\pi}$, $h(x)\wedge\hat y=0$. But $x$ is an atom and trivially $0<h(x)=h(x)\wedge h(x)$, so $x\notin A^0$; also, if $x$ were an element of $A^{1/\pi}$, then $0<\hat x=h(x)\wedge\hat x$, which contradicts what was said at the beginning of this paragraph , that is $x\notin A^{1/\pi}$. In short, $x\in A^1$, and then $x\leq u'$.
 
 Finally, if $h(x)\leq w$, then by Lemma~\ref{lema:supremo}, one of two cases must happen. The first case is that $0<h(x)\wedge\bigvee h[A^0]$. So, by the same lemma, there must be an atom $y\in A^0$ such that $0<h(x)\wedge h(y)$. Since $h$ is an isomorphism, it follows that $h(x)$, $h(y)\in\At(D)$ and, according to Proposition~\ref{obs:atom}, we conclude that $h(x)=h(y)$. Thus, by injectivity, $x=y\in A^0$. That is, $x\leq u$.
 
 The second case, $0<h(x)\wedge\bigvee h[A^0]$, implies that, by Lemma~\ref{lema:supremo}, there is some $y\in A^{1/\pi}$ for which $0<h(x)\wedge\hat y\leq h(x)\wedge h(y)$. Then, by Proposition~\ref{obs:atom}, $h(x)=h(y)$ and thus $x=y\in A^{1/\pi}$. In particular, we have that for all $z\in A^0$, $h(x)\wedge h(z)=0$ (since $z$ and $y$ belong to different classes of atoms), which has as a consequence that $h(x)\wedge\bigvee h[A^0]=0$. Consequently, only the case analyzed in the previous paragraph occurs.
 
 Now let us see that for an arbitrary $x\in C$ the item (1) and (2) of Lemma~\ref{lema:ext} are valid. Proposition~\ref{prop:atom} allows us to write $x=\bigvee\At_x(C)$, which, together with the definition of supremum, tells us that for any $v\in C_0$,
\begin{align}\label{obs:equi}
 x\leq v\qquad\text{if and only if}\qquad\text{for all }a\in\At_x(C),\ a\leq v.
\end{align} 
 We then have that (taking $v=u$ in (\ref{obs:equi})) $x\leq u$ if and only if, for all $a\in\At_x(C)$, $a\leq u$, which, from the previous paragraphs, is equivalent to the fact that for all $a\in\At_x(C)$, $h(a)\leq w$. The latter implies that $$h(x)=h\left(\bigvee\At_x(C)\right)=\bigvee\{h(a)\colon a\in\At_x(C)\}\leq w.$$ Conversely, if $h(x)\leq w$, then since $h$ is an isomorphism, for all $a\in\At_x(C)$, $h(a)\leq h(x)\leq w$. Since $a$ is an atom, it follows that $a\leq u$ and by (\ref{obs:equi}), $x\leq u$. This proves point (1).
 
 Similarly, substituting $u$ and $w$ for $u'$ and $w'$, respectively, in the previous paragraph, item (2) is obtained.
\end{proof}

\begin{theorem}\label{teo:iso_bool}
 If $(\mathbb A,\leq)$ and $(\mathbb B,\preceq)$ are (non-trivial) countable atomless Boolean algebras, then they are isomorphic.
\end{theorem}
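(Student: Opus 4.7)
The plan is to mirror the argument of Theorem~\ref{teo:caractQ}, replacing ``order-preserving finite partial functions'' with ``Boolean isomorphisms between finite subalgebras.'' The heavy lifting---extending a given isomorphism to accommodate a new element---has already been done in Lemmas~\ref{lema:ext} and~\ref{lema:hayw}; these will feed directly into the density proofs.

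First I would set up the preorder. Define $\mathbb{P}$ to be the collection of all Boolean isomorphisms $h\colon C\longrightarrow D$, where $C$ is a finite subalgebra of $\mathbb{A}$ and $D$ is a finite subalgebra of $\mathbb{B}$, ordered by reverse extension ($h_1\leq h_2$ iff $h_1\supseteq h_2$). The trivial isomorphism $p_0\colon\{0_\mathbb{A},1_\mathbb{A}\}\longrightarrow\{0_\mathbb{B},1_\mathbb{B}\}$ is the maximum element. For each $a\in\mathbb{A}$, set $D_a:=\{h\in\mathbb{P}\colon a\in\dom(h)\}$, and for each $b\in\mathbb{B}$, set $E_b:=\{h\in\mathbb{P}\colon b\in\img(h)\}$.

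Next I would prove density. Given $h\colon C\longrightarrow D$ in $\mathbb{P}$ with $a\notin\dom(h)$: since $C$ and $D$ are finite and $\mathbb{B}$ is atomless, Lemma~\ref{lema:hayw} (applied with $C_0:=\mathbb{A}$, $D_0:=\mathbb{B}$ and $u:=a$) produces $w\in\mathbb{B}$ satisfying the hypotheses of Lemma~\ref{lema:ext}; that lemma then extends $h$ to an isomorphism $H\colon C(a)\longrightarrow D(w)$ lying in $\mathbb{P}$ and having $a\in\dom(H)$, so $D_a$ is dense. Symmetrically, density of $E_b$ is obtained by running the same construction on the inverse $h^{-1}\colon D\longrightarrow C$, using the atomlessness of $\mathbb{A}$. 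The family $\mathcal{D}:=\{D_a\colon a\in\mathbb{A}\}\cup\{E_b\colon b\in\mathbb{B}\}$ is countable, so Theorem~\ref{teo:r-s} yields a $\mathcal{D}$-generic filter $F$ with $p_0\in F$. The candidate isomorphism is $f:=\bigcup F$. Compatibility in the preorder ensures $f$ is a function (as in Theorem~\ref{teo:Fn}), and meeting every $D_a$ and every $E_b$ shows $f$ is a bijection from $\mathbb{A}$ onto $\mathbb{B}$. To verify that $f$ preserves the Boolean operations, given $x,y\in\mathbb{A}$ the filter property yields some $h\in F$ with $x,y\in\dom(h)$; since $\dom(h)$ is a subalgebra and $h$ is a Boolean isomorphism, $f(x\vee y)=h(x\vee y)=h(x)\vee h(y)=f(x)\vee f(y)$, and similarly for $\wedge$ and $'$.

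The main obstacle is the density step, but because Lemmas~\ref{lema:ext} and~\ref{lema:hayw} were crafted precisely to make such extensions possible, it reduces to their invocation. The one genuine subtlety is insisting that conditions in $\mathbb{P}$ be honest isomorphisms of finite \emph{subalgebras}, not merely finite partial maps that happen to respect the operations on their support; only this framing guarantees that the simple-extension machinery of Lemma~\ref{lema:simple} applies at each density step, and that the inverse $h^{-1}$ is again a condition in $\mathbb{P}$ suitable for producing density of $E_b$.
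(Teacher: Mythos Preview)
Your proposal is correct and follows essentially the same route as the paper: the same preorder of Boolean isomorphisms between finite subalgebras, the same dense sets $D_a$ and $E_b$, and the same invocation of Lemmas~\ref{lema:hayw} and~\ref{lema:ext} (with $h^{-1}$ for the $E_b$ case) to establish density before applying Rasiowa--Sikorski. The only cosmetic difference is that the paper finishes by checking that $f$ is an order isomorphism (hence Boolean), whereas you verify the operations directly; both arguments rest on passing to a common extension in the filter.
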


\begin{proof}
 We construct the isomorphism using Rasiowa-Sikorski. With this in mind, define the set $\mathbb P$ using the following formula.
\begin{align*}
 p\in\mathbb P\text{ if and only if }p\in\Fn(A,B),\ p\text{ is a Boolean isomorphism and }\\\dom(p)\text{ and }\img(p)\text{ are finite subalgebras of }A\text{ and }B\text{, respectively}.
\end{align*}

 We give $\mathbb P$ the order $q\leq p$ if and only if $q$ is an extension of $p$, in short, $p\subseteq q$. We have that $(\mathbb P,\leq)$ is a partial order with maximum element $\{(0_A,0_B),(1_A,1_B)\}$. For ease of notation, let's abbreviate, for $p\in\mathbb P$, $A_p:=\dom(p)$ and, similarly, $B_p:=\img(p)$.
 
 For each element $a\in A$, we claim that $D_a:=\{p\in\mathbb P\colon a\in A_p\}$ is a dense subset of $\mathbb P$. For this, let's arbitrarily take $p\in\mathbb P\setminus D_a$, that is, $a\in A\setminus A_p$. Then apply lemmas \ref{lema:hayw} and \ref{lema:ext} (in that order) taking $C_0:=A$, $D_0:=B$, $C:=A_p$, $D:=B_p$, $h:=p$ and $u:=a$ to obtain, respectively, $w\in B$ and then an isomorphism $H\colon A_p(a)\longrightarrow B_p(w)$ that extends $p$. As the simple extension of a finite subalgebra becomes finite again and $a\in A_p(a)$, it turns out that $H\in D_a$, which leads to the conclusion that $D_a$ is dense.
 
 Similarly, for each $b\in B$, the set $E_b:=\{p\in\mathbb P\colon b\in B_p\}$ is dense in $\mathbb P$: given $p\in\mathbb P\setminus E_b$, we apply lemmas \ref{lema:hayw} and \ref{lema:ext} taking $C_0:=B$, $D_0:=A$, $C:=B_p$, $D:=A_p$, $h:=p^{- 1}$ and $u:=b$ to obtain, respectively, $w\in A$ and then an isomorphism $H\colon B_p(b)\longrightarrow A_p(w)$ which extends $p^{-1}$. Thus, the isomorphism $H^{- 1}\colon A_p(w)\longrightarrow B_p(b)$ is an extension of $p$ that has the element $b$ in its image so, analogous to the previous case, culminates in the proof that $E_b$ is dense in $\mathbb P$.
 
 Thus, the family $$\mathcal D:=\{D_a\colon a\in A\}\cup\{E_b\colon b\in B\}$$ is a countable collection (since $A$ and $B$ are countable) of dense sets in $\mathbb P$. By Theorem~\ref{teo:r-s}, there is a filter $F\subseteq\mathbb P$ which is $\mathcal{D}$-generic. Define $f:=\bigcup F$ and note that Theorem~\ref{teo:Fn}(2) guarantees that $f$ is a function. Let us verify that it is the sought isomorphism.
 
 First, note that given $a\in A$, by genericity, there exists $p\in G$ such that $a\in A_p\subseteq\dom(f)$. Similarly, for each $b\in B$, $b\in\img(f)$. Thus $ f\colon A\longrightarrow B$ is surjective.
 
 Given $a_0,a_1\in A$, there must be $p_0,p_1\in F$ such that for $i<2$, $a_i\in\dom(p_i)$; but $F$ is a filter, so there must be a condition $q\in F$ that extends both $p_0$ and $p_1$; especially $\{a_0,a_1\}\subseteq\dom(q)$. But $q$ is an isomorphism, so $a_0\leq a_1$ if and only if $f(a_0)=q(a_0)\preceq q(a_1)=f(a_1)$. Therefore, $f$ is an order isomorphism and therefore a Boolean isomorphism.
\end{proof}

 The rest of the section is dedicated to proving that the previous theorem is not vacuously true, that is, we concentrate on producing an example of a countable atomless Boolean algebra.
 
  Much of what is discussed below can be done in more general topological spaces, however, for the purposes of this work, it will suffice to particularize several concepts. Those that are not explicitly defined here should be understood as they appear in \cite{casa}.
 
 Let's consider the topological space ${}^\omega2$ obtained as the Tychonoff product of $\omega$ copies of the discrete space $2$. If $\CA({}^\omega2)$ denotes the collection of sets that are open and closed in ${}^\omega2$, it can be verified (see \cite[Ejemplo~3.1.6,p.~64]{bool}) that $(\CA({}^\omega2),\subseteq)$ is a Boolean algebra with $\emptyset$ and ${}^\omega2$ as minimum and maximum, respectively. Also, given $A,B\in\CA({}^\omega2)$, $A\wedge B=A\cap B$ and $A\vee B=A\cup B$.
 
 We are going to use the fact that, since every factor of the space is finite and therefore compact, by Tychonoff's Theorem, ${}^\omega2$ is compact as well.
 
 Recall that the product topology is the {\sl weak topology induced} by the family of projections (for details see section 4.3 of \cite{casa}), that is, in our particular case, the topology of ${}^\omega2$, denoted by $\tau({}^\omega2)$, is the smallest topology that makes every function $\pi_n\colon{}^\omega2\longrightarrow 2$ given by $\pi_n(f):=f(n)$ continuous. Since $2$ is a discrete space, a set $U\subseteq{}^\omega2$ is a canonical basic open set of ${}^\omega2$ if and only if there are $F\in[\omega]^{<\omega}\setminus\{\emptyset\}$ and $\{A_n\colon n\in F\}\subseteq\mathcal P(2)$ such that $$U=\bigcap_ {n\in F}\pi_n^{-1}[A_n].$$ It is worth noting that, due to the continuity of each projection, $U$, in addition to being a canonical open set, is an intersection of closed sets and, therefore, closed.
 
 Continuing with our choice of $F$ and $U$, suppose now that $U$ is nonempty, that is, $U\in\CA({}^\omega2)^+$. If we take, because $F$ is finite, $m\in\omega\setminus(\bigcup F+1)$ and any point $x\in U$, it is clear, according to the discussion in the previous paragraph, that $$x\in U':=\bigcap_{n<m}\pi_n^{-1}[\{x(n)\}]\in\tau({}^\omega2).$$ Also, given any $z\in U'$, the choice of $U'$ tells us that for all $n<m$, $z(n)=x(n)$, in particular, for all $n\in F\subseteq m$, $z(n)\in\{x(n)\}$, that is, $z\in U$. We have shown that $x\in U'\subseteq U$.
 
 Let us define now, for each $s\in{}^{<\omega}2$, $[s]:=\{x\in{}^\omega2\colon s\subseteq x\}$ and note that, if $x$ is any point of the product, $$z\in\bigcap_{n <m}\pi_n^{-1}[\{x(n)\}]$$ if and only if for all $n<m$, $z(n)=x(n)$ or, equivalently, $z\restriction m=x\restriction m$. More succinctly, $$\bigcap_{n<m}\pi_n^{-1}[\{x(n)\}]=[x\restriction m].$$ Thus, each set of the form $[x\restriction m]$ is an element of $\CA({}^\omega2)$.
 
\begin{proposition}\label{prop:base}
 The countable collection $\mathcal B:=\{[s]\colon s\in{}^{<\omega}2\}\subseteq\CA({}^\omega2)$ is a basis for the space $({}^\omega2,\tau({}^\omega2))$.
\end{proposition}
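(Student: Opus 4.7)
The plan is to verify the standard pointwise characterization of a basis: that every $[s] \in \mathcal{B}$ is open, and that for every $U \in \tau({}^\omega 2)$ and every $x \in U$ there is some $[s] \in \mathcal{B}$ with $x \in [s] \subseteq U$. Countability of $\mathcal{B}$ is essentially for free, since ${}^{<\omega}2 = \bigcup_{n<\omega}{}^n 2$ is a countable union of finite sets.

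For the first clause, the paragraphs preceding the proposition already observed that $[x \restriction m] = \bigcap_{n < m} \pi_n^{-1}[\{x(n)\}]$ for every $x$ and every $m$. Since each projection $\pi_n$ is continuous by definition of the product topology, every $[s]$ is a finite intersection of preimages of (open) singletons and hence lies in $\tau({}^\omega 2)$ (indeed in $\CA({}^\omega 2)$, as was also noted).

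For the second clause, I would fix $U \in \tau({}^\omega 2)$ and $x \in U$. Since the sets of the form $\bigcap_{n \in F}\pi_n^{-1}[A_n]$ with $F \in [\omega]^{<\omega}\setminus\{\emptyset\}$ and $A_n \subseteq 2$ constitute a basis of canonical open sets for the product topology, one can choose such a set $V$ with $x \in V \subseteq U$. Now invoke directly the computation carried out in the paragraph just before the proposition: taking $m \in \omega \setminus (\bigcup F + 1)$ and setting $s := x \restriction m$, that paragraph shows $x \in [s] \subseteq V \subseteq U$. This exhibits the required basis element.

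There is no real obstacle here; the substantive work was performed in the paragraphs leading up to the statement, where it was verified both that each $[s]$ is clopen and that the canonical open rectangles are ``refined'' by the cylinders $[x \restriction m]$. The only subtlety worth double-checking is the edge case where $F = \emptyset$ (so $V = {}^\omega 2$): in that situation one simply chooses $m = 0$ and uses $[\emptyset] = {}^\omega 2$, which is already in $\mathcal{B}$.
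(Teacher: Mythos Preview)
Your proof is correct and follows essentially the same route as the paper: countability of ${}^{<\omega}2$ is noted, the inclusion $\mathcal B\subseteq\CA({}^\omega2)$ is inherited from the preceding discussion, and the basis condition is checked by refining a canonical basic open set via the already-established inclusion $x\in[x\restriction m]\subseteq U$. The only cosmetic difference is that you pass through an arbitrary open $U$ before reducing to a canonical basic $V$, whereas the paper starts directly with a canonical basic open; your added remark about the $F=\emptyset$ edge case is harmless but unnecessary, since the paper's canonical basics require $F\neq\emptyset$ and the whole space is still obtained by taking some $A_n=2$.
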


\begin{proof}
 Since $|{}^{<\omega}2|=\left|\bigcup_{n<\omega}{}^n2\right|\leq\omega\cdot\omega=\omega$, $\mathcal B$ is countable. Furthermore, the final comment in the paragraph preceding this statement results in $\mathcal B\subseteq\CA({}^\omega2)$.
 
 Given a canonical basic open set $U$ in ${}^\omega2$ and $x\in U$, the previous paragraphs to this proposition justify the existence of a natural number $m$ such that $$x\in[x\restriction m]\subseteq U.$$ Consequently, $\mathcal B$ is a basis of the space in question.
\end{proof}

 Let us now show that the Boolean algebra $\CA({}^\omega2)$ is atomless: given $A\in\CA({}^\omega2)^+$, we can choose $x\in A$ and, by the previous proposition, some $m<\omega$ such that $x\in[x\restriction m]\subseteq A$. Note the inclusion $[x\restriction(m+1)]\subseteq[x\restriction m]$; moreover, the function $f\in{}^\omega2$ given by $$f(n):=\begin{cases}x(n),\quad&n\neq m\\1-x(n),\quad&n=m\end{cases},$$ satisfies that $f\in[x\restriction m]\setminus[x\restriction(m+1)]$ and thus attests that the inclusion is proper. Then $x\in[x\restriction(m+1)]\subsetneq A$, which proves that $A$ is not an atom. We conclude that $\At(\CA({}^\omega2))=\emptyset$.
 
 Given any $A\in\CA({}^\omega2)$, since $A$ is open and because of Proposition~\ref{prop:base}, there exists $B\subseteq\mathcal B$ such that $A=\bigcup B$. Because $A$ is closed in the compact space ${}^\omega2$, $A$ is compact and, being $B$ an open cover of $A$, it has some finite subcover. Therefore, each element of $\CA({}^\omega2)$ is the finite union of elements of $\mathcal B$, which is a countable collection. Therefore  $\CA({}^\omega2)$ is countable.
 
 In summary, $(\CA({}^\omega2),\subseteq)$ is a countable atomless Boolean algebra; furthermore, according to Theorem~\ref{teo:iso_bool}, it is the only one up to isomorphism. With this we complete the section.

\section{An application to random graphs}
 In this section we will explore a use of the Rasiowa-Sikorski Lemma in a topic of great interest to Graph Theory. Let us then consider the following. We will not use more than the most basic definitions of graphs; for more details, we recommend the first two chapters of \cite{bondy}. Recall that a {\sl graph} is an ordered pair $(V,E)$ where $V$ is a set, whose elements are called {\sl vertices}, and $E$ is a set of pairs of vertices called {\sl edges}; two vertices joined by an edge are {\sl adjacent}. If the set of edges is clear, it is usual to refer to $V$ as the graph. Given a vertex $v$, $N(v)$ denotes the set of vertices that are adjacent to $v$ and their elements are called {\sl neighbors} of $v$. Two graphs are {\sl isomorphic} if there is a bijection between the sets of vertices that satisfies that two vertices are adjacent in the first graph if and only if the images of said vertices under the isomorphism are adjacent in the second graph. Let's look at two relevant examples of graphs.
 
 Suppose that $M$ is a transitive and countable model of a sufficiently large fragment of Set Theory (for an explanation of the existence of $M$, we refer the reader to sections 1 and 9 of chapter VII of \cite{kunen}). Then two sets $x,y\in V$ will be adjacent if and only if $x\in y$ or $y\in x$. Thus we form a graph with $M$. For the second example, let's take $\mathbb N$ as the set of vertices and form a graph $N$ as follows. For each pair of natural numbers, we will choose randomly and with probability $1/2$, if they are adjacent or not. Naturally, this process can produce many different graphs, but what is the probability that the two graphs $M$ and $N$ are isomorphic? The aim of this section is to answer this question, of course, using the material discussed so far.
 
  Let's start by thinking about what properties our graphs have in common.
 
\begin{definition}\label{def:ext}
 A graph $(V,E)$ has the {\sl extension property} if for any two finite disjoint sets of vertices $A$ and $B$, there exists a vertex $v\in V\setminus(A\cup B)$ such that $A\subseteq N(v)$ but $B\cap N(v)=\emptyset$.
\end{definition}

 Due to the restriction that the vertex $v$ does not belong to $A$ or $B$, it is clear that a graph with this property necessarily has an infinite number of vertices.

\begin{lemma}
 $M$ fulfills the extension property and the probability that $G$ fulfills it is $1$.
\end{lemma}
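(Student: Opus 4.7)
My plan is to prove the two claims separately. For $M$, I would use a ``high-rank witness'' construction. Given disjoint finite $A,B\subseteq M$, pick an ordinal $\alpha\in M$ strictly larger than $\mathrm{rank}(x)$ for every $x\in A\cup B$; such $\alpha$ exists since $M$ is a transitive model of enough set theory and therefore contains arbitrarily large ordinals. Define $v:=A\cup\{\alpha\}$. Because $M$ is closed under pairing and finite unions, $v\in M$. Then for each $a\in A$ we have $a\in v$, so $v$ is adjacent to $a$. For $b\in B$: by disjointness $b\notin A$, and $b=\alpha$ would force $\mathrm{rank}(b)=\alpha$, contradicting the choice of $\alpha$; hence $b\notin v$. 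Furthermore $\mathrm{rank}(v)=\alpha+1>\mathrm{rank}(b)$, so $v\notin b$, and $v$ is not adjacent to $b$. The same rank inequality also yields $v\neq x$ for every $x\in A\cup B$, giving $v\notin A\cup B$.

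For $G$, my plan is a routine Borel--Cantelli-style argument. Fix a pair $(A,B)\in[\mathbb{N}]^{<\omega}\times[\mathbb{N}]^{<\omega}$ of disjoint finite sets and set $p:=2^{-(|A|+|B|)}>0$. For any vertex $v\in\mathbb{N}\setminus(A\cup B)$, the event that $A\subseteq N(v)$ and $B\cap N(v)=\emptyset$ depends only on the $|A|+|B|$ independent coin flips governing the edges between $v$ and $A\cup B$, hence has probability exactly $p$. As $v$ varies over the infinite set $\mathbb{N}\setminus(A\cup B)$, these witness events depend on disjoint collections of edges and are therefore mutually independent, so
\[
\Pr\!\left[\text{no } v \text{ witnesses the property for } (A,B)\right]=\prod_{v\in\mathbb{N}\setminus(A\cup B)}(1-p)=0.
\]
Since $[\mathbb{N}]^{<\omega}\times[\mathbb{N}]^{<\omega}$ is countable, countable subadditivity gives that the probability that the extension property fails for \emph{some} pair $(A,B)$ is $0$, so $G$ has the extension property almost surely.

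The only step I expect to require care is checking $v\in M$ in the first part; this rests on the assumption that $M$ satisfies a large enough fragment of ZFC to admit finite set formation and to contain unboundedly many ordinals, which is what ``sufficiently large fragment of Set Theory'' is meant to provide. The remainder --- rank bookkeeping for $M$ and the vanishing infinite product for $G$ --- is bookkeeping.
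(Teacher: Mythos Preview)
Your proof is correct. For $G$, your argument is exactly the paper's single-pair computation, and you add the final step---countable subadditivity over all pairs $(A,B)$---that the paper leaves implicit, so your version is actually more complete.

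For $M$, the paper chooses the slightly slicker witness $v:=A\cup\{B\}$ and argues directly from Foundation: $b\in v$ would force $b\in A$ (impossible by disjointness) or $b=B$ (giving $b\in b$), while $v\in b$ would yield the $\in$-cycle $v\in b\in B\in v$; similarly $v\in A\cup B$ produces a short cycle. Your witness $v:=A\cup\{\alpha\}$ with $\alpha$ an ordinal above all relevant ranks works just as well; you are trading the paper's explicit cycle-chasing for rank comparisons, which of course encode Foundation. The only cost is that your version asks a bit more of the ``sufficiently large fragment''---unboundedly many ordinals in $M$ and absoluteness of rank for transitive models---whereas the paper gets by with Pairing, Union, and Foundation alone. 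Both are perfectly standard.
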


\begin{proof}
 Given $A$ and $B$ subsets of $M$ as in Definition~\ref{def:ext}, for being a model of Set Theory and satisfying, in particular, the axioms of Pairing, Union and Foundation , $v:=A\cup\{B\}$ is a vertex of $M$ with the desired property.
 
  Similarly, we take $A$ and $B$ in $G$. For a vertex $v$, the probability that it does not satisfy the desired conclusion is $1-2^{-(|A|+|B|)}$. The probability that no vertex of the infinite set $\mathbb N\setminus(A\cup B)$ satisfies the condition, by independence, is $$\lim_{k\to\infty}\left(1-2^{-(|A|+|B|)}\right)^k=0,$$ from where, taking complements, the result is obtained.
\end{proof}

 This lemma, together with Theorem~\ref{teo:random}, proves not only that our two graphs are isomorphic, but that, except for isomorphism, they are the only graph with the previously mentioned property.

\begin{theorem}\label{teo:random}
 Any two graphs over a countable set of vertices that satisfy the extension property are isomorphic.
\end{theorem}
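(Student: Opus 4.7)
My plan is to mimic the Rasiowa-Sikorski strategy used for Cantor's theorem in Section~3. Let $(V_1,E_1)$ and $(V_2,E_2)$ be two countable graphs with the extension property, and write $N_i(v)$ for the neighborhood of a vertex $v$ in $V_i$. I would take
$$\mathbb{P}:=\{p\in\Fn(V_1,V_2)\colon p\text{ is injective and }\forall u,v\in\dom(p)\,(\{u,v\}\in E_1\leftrightarrow\{p(u),p(v)\}\in E_2)\},$$
i.e.\ the collection of finite partial graph isomorphisms, ordered by reverse inclusion, with maximum $\emptyset$. For each $v\in V_1$ let $D_v:=\{p\in\mathbb{P}\colon v\in\dom(p)\}$, and for each $w\in V_2$ let $E_w:=\{p\in\mathbb{P}\colon w\in\img(p)\}$.

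The crux is showing that every $D_v$ is dense. Given $p\in\mathbb{P}\setminus D_v$, I would partition $\dom(p)=A\sqcup B$ where $A:=N_1(v)\cap\dom(p)$ and $B:=\dom(p)\setminus A$. Applying the extension property to the finite disjoint subsets $p[A]$ and $p[B]$ of $V_2$, I obtain a vertex $w\in V_2\setminus(p[A]\cup p[B])$ adjacent in $V_2$ to every element of $p[A]$ and to no element of $p[B]$. Then $q:=p\cup\{(v,w)\}$ is injective (because $w\notin\img(p)$), still preserves and reflects adjacency (by the choice of $w$ together with the fact that $p$ already did so on its old domain), and contains $v$ in its domain; hence $q\in D_v$ and $q\supseteq p$. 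Density of $E_w$ follows by the symmetric argument applied to $p^{-1}$ using the extension property of $V_1$.

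Since $V_1$ and $V_2$ are countable, the family $\mathcal{D}:=\{D_v\colon v\in V_1\}\cup\{E_w\colon w\in V_2\}$ is countable, so Theorem~\ref{teo:r-s} yields a $\mathcal{D}$-generic filter $F\subseteq\mathbb{P}$. Set $f:=\bigcup F$; by Theorem~\ref{teo:Fn}(2) this is a function, and genericity together with the usual argument gives $\dom(f)=V_1$ and $\img(f)=V_2$. For injectivity, if $f(v_1)=f(v_2)$, I would pick $p_1,p_2\in F$ witnessing this and use that $F$ is a filter to obtain $q\in F$ extending both; the injectivity of $q\in\mathbb{P}$ then forces $v_1=v_2$. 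To verify that $f$ is a graph isomorphism, given $v_1,v_2\in V_1$, an analogous amalgamation inside $F$ produces $q\in F$ containing both vertices in its domain, and since $q$ preserves and reflects adjacency we get $\{v_1,v_2\}\in E_1\leftrightarrow\{f(v_1),f(v_2)\}\in E_2$.

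The only real obstacle is the density of the $D_v$, and it is resolved precisely by the extension property: what one needs is not merely a vertex with the prescribed adjacency pattern, but one outside the finite image of $p$ so that injectivity is preserved, and this is exactly the condition $v\in V\setminus(A\cup B)$ built into Definition~\ref{def:ext}. Everything else is bookkeeping parallel to Theorem~\ref{teo:caractQ}.
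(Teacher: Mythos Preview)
Your proposal is correct and follows essentially the same approach as the paper: the same poset of finite partial graph isomorphisms under reverse inclusion, the same dense sets $D_v$ and $E_w$, and the same use of the extension property (applied to the images of the neighbors and non-neighbors of $v$ in $\dom(p)$) to witness density. If anything, you are more explicit than the paper about why $q$ remains injective and about the final verification that $f$ is a graph isomorphism, whereas the paper dismisses those steps as ``repetitive at this point.''
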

 
 Again, the plan is to construct the isomorphism using Theorem~\ref{teo:r-s}. It is worth noting that we are constructing an isomorphism of graphs, not of order, in contrast to the results of the previous sections.
 
\begin{proof}
 Let $(V_0,E_0)$ and $(V_1,E_1)$ be two graphs with the extension property and where $V_0$ and $V_1$ are countable (and consequently infinite). Let's define $\mathbb P$ by means of the formula $p\in\mathbb P$ if and only if $p\in\Fn(V_0,V_1)$ is a bijection and for all $x,y\in\dom(p)$, $(x,y)\in E_0$ is equivalent to $(p(x),p(y))\in E_1$. Sorted by inverse inclusion, clearly $\emptyset$ is, as usual in these constructions, a maximum element of the preorder.
 
 For each $x\in V_0$ and each $y\in V_1$, we define the sets $D_x:=\{p\in\mathbb P\colon x\in\dom(p)\}$ and $D_y':=\{p\in\mathbb P\colon y\in\img(p)\}$. To see that these sets are dense in $\mathbb P$, take take $p\in\mathbb P\setminus D_x$. Let's apply the extension property that has $(V_1,E_1)$ to the finite disjoint sets $A:=p[N(x)]$ and $B:=p[V_0\setminus N(x)]$ to obtain a vertex $v\in V_1$ with the qualities that this property guarantees. Then, we assert that $q:=p\cup\{(x,v)\}$ is an element of $\mathbb P$ such that $q\supseteq p$, that is, a bijection that attests the density of $D_x$. Similarly, the extension property in $(V_0,E_0)$ can be used to prove the density of $D_y'$.
 
 Applying Theorem~\ref{teo:r-s} to the countable family of dense sets $\{D_x\colon x\in V_0\}\cup\{D_y'\colon y\in V_1\}$, we produce a generic filter $F$. Using arguments that are repetitive at this point, it can be verified that $f:=\bigcup F$ is an isomorphism between the two graphs.
\end{proof}

 Having two examples of graphs with the property of extension, we have tested the existence and uniqueness, with which we can name the graph that stars in this section as {\sl Rado graph}, also known as {\sl random graph }. It is worth mentioning some of the most remarkable properties that the Rado graph has.
 
 Using the extension property as defined, it is a simple exercise to demonstrate the following:

\begin{theorem}

 Let us denote the Rado graph by $R$.
 
\begin{enumerate}
 \item Adding or subtracting any finite number of vertices or edges from $R$ produces an isomorphic graph to $R$.
 \item Given a finite partition of the vertices of $R$, one of the classes induces (in the usual sense of Graph Theory) a graph isomorphic to $R$. Furthermore, of the non-trivial and non-complete countable graphs, $R$ is, up to isomorphism, the only one with this property (see \cite[Proposition~4,p.~5]{random}).
 \item $R$ is isomorphic to the graph obtained by inverting all adjacencies and non-adjacencies of $R$, that is, it is {\sl self-complementary}.
\end{enumerate}
\end{theorem}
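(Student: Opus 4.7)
The plan for all three items is the same in spirit: by Theorem~\ref{teo:random}, it suffices to show that the graph obtained from $R$ by each of these operations is again countable and has the extension property (Definition~\ref{def:ext}); countability is trivial in every case, so the work lies in verifying the extension property.

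For item (1), let $R'$ be obtained from $R$ by adjoining or deleting a finite set $S$ of vertices and a finite set $T$ of edges. Given finite disjoint $A,B\subseteq V(R')\setminus S$ (vertices outside $S$ are already vertices of $R$), enlarge $A$ and $B$ to $A^*:=A\cup S_A$ and $B^*:=B\cup S_B$, where $S_A$ and $S_B$ are finite auxiliary sets of $R$-vertices whose role is to ``absorb'' the finitely many edges of $T$ and the behavior of vertices in $S$; then apply the extension property of $R$ to $A^*,B^*$ to obtain a vertex $v\in V(R)\setminus(A^*\cup B^*)$ with the appropriate $R$-adjacencies. Since $v$ was chosen outside the finite zone where $R$ and $R'$ differ, the same $v$ witnesses the extension property for $A,B$ in $R'$.

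For item (2), suppose $V(R)=V_1\cup\cdots\cup V_n$ is a finite partition and, for contradiction, that no $V_i$ (with the induced subgraph) has the extension property. Then for each $i\leq n$ we may pick finite disjoint $A_i,B_i\subseteq V_i$ for which no vertex of $V_i\setminus(A_i\cup B_i)$ realizes the required adjacency pattern. Let $A:=\bigcup_{i\leq n}A_i$ and $B:=\bigcup_{i\leq n}B_i$; these are finite and disjoint. Applying the extension property of $R$ yields $v\in V(R)\setminus(A\cup B)$ with $A\subseteq N(v)$ and $B\cap N(v)=\emptyset$. But $v$ lies in some $V_j$, and there $A_j\subseteq A\subseteq N(v)$ and $B_j\cap N(v)\subseteq B\cap N(v)=\emptyset$, contradicting the choice of $A_j,B_j$. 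Hence some $V_j$ has the extension property; since that property forces $V_j$ to be infinite and $V_j$ is countable, Theorem~\ref{teo:random} gives the induced subgraph on $V_j$ isomorphic to $R$. For the uniqueness clause alluded to in the statement, one would in addition need to show no other countable graph (other than the complete and empty ones) enjoys this ``indivisibility'', which reduces to checking that failure of the extension property lets one split the vertex set into two parts, each avoiding the extension property --- this is the step I expect to be most delicate, and its proof is essentially the content of the cited \cite[Proposition~4]{random}.

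For item (3), let $\overline R$ be the complement of $R$. Given finite disjoint $A,B\subseteq V(R)$, apply the extension property of $R$ with the roles of $A$ and $B$ swapped: there exists $v\in V(R)\setminus(A\cup B)$ with $B\subseteq N_R(v)$ and $A\cap N_R(v)=\emptyset$. Translating back, $v$ is $\overline R$-adjacent to every element of $A$ and to no element of $B$, so $\overline R$ has the extension property. As $\overline R$ is countable, Theorem~\ref{teo:random} yields $\overline R\cong R$. Altogether, the main obstacle is not any single Rasiowa-Sikorski-style construction --- which is hidden inside Theorem~\ref{teo:random} --- but rather the bookkeeping in item (2), in particular the converse direction characterizing $R$ by the partition property.
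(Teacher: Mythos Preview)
The paper does not actually prove this theorem: it is stated as ``a simple exercise'' using the extension property, with the uniqueness clause in (2) delegated to \cite{random}. Your proposal follows exactly the route the paper intends---verify that the modified graph is countable and still has the extension property, then invoke Theorem~\ref{teo:random}---and your arguments for items (2) and (3) are correct and complete, matching what the paper leaves implicit.

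One genuine wrinkle in item (1): you restrict the test sets to $A,B\subseteq V(R')\setminus S$, but the extension property must be checked for \emph{all} finite disjoint $A,B\subseteq V(R')$. For deletion of vertices this is harmless (there $V(R')\cap S=\emptyset$, and one simply applies the extension property of $R$ to $A$ and $B\cup S$ to force the witness outside $S$). For \emph{addition} of vertices, however, the statement itself needs interpretation: if one adjoins a new vertex $s$ with only finitely many neighbours in $R$ (e.g.\ an isolated vertex), then taking $A=\{s\}$ shows the extension property fails, so the resulting graph is not isomorphic to $R$. Your sketch glosses over this, but so does the paper; the honest version of (1) is that deleting finitely many vertices, or adding/deleting finitely many edges, preserves $R$ up to isomorphism, and that is exactly what your method establishes once the bookkeeping is tightened.
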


 We refer the reader interested in more properties and alternative constructions of $R$, which also cover several areas of mathematics, to the article \cite{random}, where, by the way, a version of Theorem~\ref{teo:random} is proved via {\sl back-and-forth}.

\bibliographystyle{plain}
\bibliography{AnAlternativeToBackAndForth}

\end{document}